\begin{document}

\begin{frontmatter}

\title{IP$^{*}$-sets in function field and mixing properties}

\author[rvt]{Dibyendu De\corref{cor1}\fnref{fn1}}
\ead{dibyendude@klyuniv.ac.in}


\fntext[fn1]{This author is partially supported by DST-PURSE programme.}

\address[rvt]{Department of Mathematics, University of Kalyani, Kalyani-741235,
West Bengal, India}

\author[focal]{Pintu Debnath\fnref{fn2}}
\ead{pintumath1989@gmail.com}
\address[focal]{Department of Mathematics, Bashirhat College, West Bengal, India}
\fntext[fn2]{The work of this article was a part of this author's Ph.D. dissertation which was supported by a CSIR Research Fellowship.}

\newtheorem{thm}{Theorem}
\newtheorem{lem}[thm]{Lemma}
\newtheorem{cor}[thm]{Corollary}
\newtheorem{defn}[thm]{Definition}
\newdefinition{exam}{Example}
\newtheorem{qs}{Question}
\newdefinition{rmk}{Remark}
\newproof{pf}{Proof}
\newproof{pot}{Proof of Theorem \ref{thm2}}

\newcommand{\R}{{\mathbb R}}
\newcommand{\Q}{{\mathbb Q}}

\newcommand{\T}{{\mathbb T}}
\newcommand{\C}{{\mathbb C}}
\newcommand{\tto}{{\longrightarrow}}
\newcommand{\emp}{\emptyset}

\begin{abstract}
The ring of polynomial over a finite field $F_q[x]$ has received much attention, both from a combinatorial  viewpoint   as in regards to its action on  measurable dynamical systems. In the case of $(\mathbb{Z},+)$ we know that the ideal generated by any nonzero element is an IP$^*$-set.  In the present article  we first establish that the analogous result is true for $F_q[x]$.  We further use this result to establish some mixing properties of the action of $(F_q[x],+)$. We shall also discuss on Khintchine's recurrence for the action of $(F_q[x]\setminus\{0\},\cdot)$.
\end{abstract}

\begin{keyword}
IP$^*$-set\sep Central$^*$-set\sep $\triangle$-set \sep Strong mixing,   Finite Field
\MSC[2010] primary 54D35, 22A15\sep  secondary 05D10, 54D80
\end{keyword}

\end{frontmatter}


\section{Introduction}

By a measurable dynamical system (MDS), we mean $\left(X,\mathcal{B},\mu,(T_{g})_{g\in G}\right)$,
where $\left(X,\mathcal{B},\mu\right)$ is a probability space and
for each $g\in G$, $T_{g}:X\to X$ is an invertible and measure preserving transformation.
For an MDS $\left(X,\mathcal{B},\mu,(T_{g})_{g\in G}\right)$, let
${\cal B}^{+}$ be the set of all positive measured sets, and $N(A,B)=\{g\in G:\mu(A\cap T_{g}B)\neq0\}$.
The classical results in ergodic theory state that a transformation
$(T_{g})_{g\in G}$ is ergodic iff $N(A,B)\neq\emptyset$ for each
pair of $A,B\in{\cal B}^{+}$, weakly mixing iff $\{g\in G:|\mu(A\cap T_{g}B)-\mu(A)\mu(B)|<\epsilon\}$
is a central$^{*}$-set for each pair of $A,B\in{\cal B}^{+}$, mildly
mixing iff $\{g\in G:|\mu(A\cap T_{g}B)-\mu(A)\mu(B)|<\epsilon\}$ is
an IP$^{*}$-set, and strongly mixing iff $\{g\in G:|\mu(A\cap T_{g}B)-\mu(A)\mu(B)|<\epsilon\}$
is a cofinite set.  See for example \citep{refBD,refKY08}.
In \citep{refKY08} authors described the notions in terms of families.
We are here interested with the family of central$^{*}$-sets, IP$^{*}$-sets,
 cofinite sets and difference sets which we shall denote by ${\cal C}^{*}$, ${\cal IP}^{*}$,
 ${\cal C}_f$ and $\triangle$ respectively. The notions of ${\cal C}^{*}$, ${\cal IP}^{*}$ will be defined latter.  In these terms ${\cal C}^{*}$-mixing
implies weak mixing, ${\cal IP}^{*}$-mixing implies mild mixing and
${\cal C}_{f}$-mixing implies strong mixing.
\begin{defn} Let $A$ be a subset of a semigroup $S$.

(1) $A$ is called an IP-set if there is a subsequence $\langle x_n\rangle_{n=1}^{\infty}$ such that all finite subset $F\in\mathcal{P}_f(\mathbb{N})$
sums of forms $\sum_{n\in F}x_n$  are in $A$. A subset $A\subset S$ is said to be an IP$^*$-set, if it meets every IP-set in $S$. The collection
of all $\mbox{IP}$-sets is denoted by $\mathcal{IP}$ and the collection of all IP$^*$-sets will be denoted by  $\mathcal{IP}^*$.\\

(2) $D$ is called a $\triangle$-set if it contains an infinite difference set, i.e. there is a
sequence $\langle x_n\rangle_{n=1}^{\infty}$ of $S$ such that $D \supset \triangle\left(\langle x_n\rangle_{n=1}^{\infty}\right) =\{ x_n\cdot x_m ^{-1}:m,n\in\mathbb{N}\}$.  A subset $D\subset S$ is said to be an $\triangle^*$-set, if it meets every $\triangle$-set in $S$. The collection of $\triangle$-sets is denoted by $\bigtriangleup$ and the collection of all $\triangle^*$-sets will be denoted by  $\bigtriangleup^*$.
\end{defn}

In recent years $(F_{q}[x],+)$, where $F_{q}[x]$ denotes the ring
of all polynomials over the finite field of characteristic $q$, has
received much attention both from a combinatorial  viewpoint   as in regards to its action on  measurable dynamical systems.
In \citep{refL} the author has proved a version of celebrated Green-Tao
Theorem for $(F_{q}[x],+)$. In \citep{refBTZ} the authors  proved
some higher order versions of Khintchine's recurrence theorem for the action
of $(F_{q}[x],+)$. In the present article we shall present some combinatorial
properties of $(F_{q}[x],+)$ and $(F_{q}[x],\cdot)$. Further we
will apply these combinatorial properties to find some interesting
properties of their actions on measure space.

In order to discuss combinatorial properties of $(F_{q}[x],+)$ and $(F_{q}[x],\cdot)$
we shall need  algebraic properties of its Stone-\v{C}ech compactification of $F_{q}[x]$.
For this purpose we need to discuss the algebra of the Stone \v{C}ech
compactification of a discrete semigroup $S$. For a  discrete semigroup $S$, the Stone-\v{C}ech compactification of $S$ will be denoted by $\beta S$.    We take the points
of $\beta S$ to be the ultrafilters on $S$, identifying the principal
ultrafilters with the points of $S$ and thus pretending that $S\subseteq\beta S$.
Given $A\subseteq S$, we denote
\[
\overline{A}=\{p\in\beta S:A\in p\}.
\]

The set $\{\overline{A}:A\subset S\}$ is a basis for the closed sets
of $\beta S$. The operation $`\cdot$' on $S$ can be extended to the
Stone-\v{C}ech compactification $\beta S$ of $S$ so that $(\beta S,\cdot)$
is a compact right topological semigroup (meaning that for any $p\in\beta S$,
the function $\rho_{p}:\beta S\rightarrow\beta S$ defined by $\rho_{p}(q)=q\cdot p$
is continuous) with $S$ contained in its topological center (meaning
that for any $x\in S$, the function $\lambda_{x}:\beta S\rightarrow\beta S$
defined by $\lambda_{x}(q)=x\cdot q$ is continuous). A nonempty subset
$I$ of a semigroup $T$ is called a \textit{left ideal} of $S$ if
$TI\subset I$, a \textit{right ideal} if $IT\subset I$, and a \textit{two
sided ideal} (or simply an \textit{ideal}) if it is both a left and
right ideal. A \textit{minimal left idea}l is the left ideal that
does not contain any proper left ideal. Similarly, we can define \textit{minimal
right ideal} and \textit{smallest ideal}.

Any compact Hausdorff right topological semigroup $T$ has the smallest
two sided ideal

\[
\begin{array}{ccc}
K(T) & = & \bigcup\{L:L\text{ is a minimal left ideal of }T\}\\
 & = & \,\,\,\,\,\bigcup\{R:R\text{ is a minimal right ideal of }T\}.
\end{array}
\]

Given a minimal left ideal $L$ and a minimal right ideal $R$, $L\cap R$
is a group, and in particular contains an idempotent. If $p$ and
$q$ are idempotents in $T$ we write $p\leq q$ if and only if $pq=qp=p$.
An idempotent is minimal with respect to this relation if and only
if it is a member of the smallest ideal $K(T)$ of $T$. Given $p,q\in\beta S$
and $A\subseteq S$, $A\in p\cdot q$ if and only if the set $\{x\in S:x^{-1}A\in q\}\in p$,
where $x^{-1}A=\{y\in S:x\cdot y\in A\}$. See \citep{refHS} for
an elementary introduction to the algebra of $\beta S$ and for any
unfamiliar details.

\begin{defn}Let $C$ be a subset of a semigroup $S$. Then $C$ is called a central set if there exists a minimal idempotent $p\in K(\beta S)$ such that $C\in p$. A subset of $S$, which meets every central set, called central$^*$ set.  We shall denote the class of all central sets as $\mathcal{C}$  and that of all central$^*$ sets as $\mathcal{C}^*$.
\end{defn}

The notion of central set was first introduced by Furstenberg in \citep{refF}
using topological dynamics and proved to be equivalent with definition
in \citep{refBH}. The basic fact that we need about central sets
is given by the Central Sets Theorem, which is due to Furstenberg
\citep[Proposition 8.21]{refF} for the case $S=\mathbb{Z}$.

\begin{thm}[Central Sets Theorem]
 \label{cst} Let $S$ be a semigroup. Let ${\cal T}$ be the set
of sequences $\langle y_{n}\rangle_{n=1}^{\infty}$ in $S$. Let $C$
be a subset of $S$ which is central and let $F\in\mathcal{P}_{f}(\mathcal{T})$.
Then there exist a sequence $\langle a_{n}\rangle_{n=1}^{\infty}$
in $S$ and a sequence $\langle H_{n}\rangle_{n=1}^{\infty}$ in $\mathcal{P}_{f}(\mathbb{N})$
such that for each $n\in\mathbb{N}$, $\max H_{n}<\min H_{n+1}$ and for each
$L\in\mathcal{P}_{f}(\mathbb{N})$ and each $f\in F$, $\sum_{n\in L}(a_{n}+\sum_{t\in H_{n}}f(t))\in C$.
\end{thm}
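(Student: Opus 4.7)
The plan is to work inside the algebra of the Stone--\v{C}ech compactification $\beta S$ set up in the paper. Since $C$ is central, fix a minimal idempotent $p\in K(\beta S)$ with $C\in p$. The structural fact I would lean on throughout is that for any $A\in p$ the set $A^{\star}=\{x\in A:-x+A\in p\}$ again lies in $p$, and whenever $x\in A^{\star}$ one has $-x+A^{\star}\in p$. This self-reproducing behaviour of $A^{\star}$ is the engine behind every Hindman-style selection.

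I would construct $\langle a_n\rangle$ and $\langle H_n\rangle$ by induction on $n$, carrying the invariant that every finite sum $\sum_{n\in L}\bigl(a_n+\sum_{t\in H_n}f(t)\bigr)$ built so far (over every $L\subseteq\{1,\dots,n\}$ and every $f\in F$) belongs to $C^{\star}$. At step $n$, by intersecting finitely many translates of $C^{\star}$ along the previously constructed partial sums I obtain a single set $B_n\in p$, and the step reduces to the core problem of finding $a_n\in B_n^{\star}$ together with one finite set $H_n$, with $\min H_n>\max H_{n-1}$, such that $a_n+\sum_{t\in H_n}f(t)\in B_n^{\star}$ simultaneously for every $f\in F$.

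The only substantive obstacle is this simultaneous selection. To handle it, for each $f\in F$ I would apply the Ellis--Numakura lemma to the compact subsemigroup $T_f=\bigcap_{m=1}^{\infty}\overline{FS(\langle f(t)\rangle_{t=m}^{\infty})}$ of $\beta S$ and extract an idempotent $q_f\in T_f$, so that every tail $FS$-set of $f$ belongs to $q_f$. One then ties these $q_f$ to $p$ by working in an auxiliary compact right-topological semigroup (either the product $(\beta S)^{|F|}$ combined with the diagonal embedding of $\beta S$, or an iterated composite of the $q_f$ with $p$) and iterating the identity $A\in u+v\iff\{x:-x+A\in v\}\in u$: this shows that the set of $a\in B_n^{\star}$ admitting, for each $f\in F$, a tail index $m_f$ with $a+FS(\langle f(t)\rangle_{t=m_f}^{\infty})\subseteq B_n^{\star}$ itself lies in $p$, and is therefore non-empty. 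Choosing any such $a_n$ and any finite $H_n\subseteq[\max_f m_f,\infty)$ with $\min H_n>\max H_{n-1}$ yields the required common $H_n$. A routine bookkeeping argument then closes the induction and delivers the stated conclusion.
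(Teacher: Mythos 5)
The paper itself gives no proof of this theorem --- it is quoted from Furstenberg and from the Hindman--Strauss / De--Hindman--Strauss literature --- so the only question is whether your argument stands on its own. Your outer skeleton is the standard one (fix a minimal idempotent $p$ with $C\in p$, run the Galvin--Glazer induction with the sets $A^{\star}=\{x\in A:-x+A\in p\}$, and reduce everything to a one-step ``joint selection'' of $a_n$ and a single common $H_n$), but the step you use to resolve the joint selection is false. You claim that $p$-many $a\in B_n^{\star}$ admit, for each $f\in F$, a tail index $m_f$ with $a+FS(\langle f(t)\rangle_{t=m_f}^{\infty})\subseteq B_n^{\star}$. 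That would mean some translate of an \emph{entire} tail $FS$-set of $f$ sits inside $B_n^{\star}$, which is far stronger than the theorem and fails already for $|F|=1$ in $(\mathbb{N},+)$: take $C=\bigcup_{n}[a_n,a_n+n]$ with $a_n$ growing super-exponentially ($C$ is thick, hence central) and $f(t)=a_t$; then for every $a$ and every $m$ the element $a+a_t+a_{t+1}$ of $a+FS(\langle f(t)\rangle_{t= m}^{\infty})$ lies in the gap between $a_{t+1}+t+1$ and $a_{t+2}$ for all large $t$, so no translated tail $FS$-set is contained in $C$, let alone in $B_n^{\star}\subseteq C$. Relatedly, the idempotents $q_f\in T_f$ do not ``tie to'' $p$ in the way you need: $B_n^{\star}\in p=p+p$ tells you nothing about membership in $p+q_f$, so there is no reason that $-a+B_n^{\star}\in q_f$ for $p$-many $a$.

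A structural symptom of the same gap is that your argument never uses minimality of $p$, only idempotency. The conclusion of the Central Sets Theorem is genuinely stronger than what members of arbitrary idempotents (i.e.\ IP-sets) satisfy, so no proof valid for every idempotent can be correct; minimality must enter exactly at the joint selection step. The standard way to carry it out is the one you gesture at but do not execute: in $Y=(\beta S)^{F}$ put $I_k=\{(a+\sum_{t\in H}f(t))_{f\in F}: a\in S,\ H\in\mathcal{P}_f(\mathbb{N}),\ \min H>k\}$, show that $\overline{I_k}$ is an ideal of the compact right topological subsemigroup $E_k=\Delta(\beta S)\cup\overline{I_k}$ (where $\Delta(\beta S)$ is the diagonal), and deduce from minimality of $p$ that $\bar p=(p,\dots,p)\in K(E_k)\subseteq\overline{I_k}$; then the basic neighbourhood $\prod_{f\in F}\overline{B_n^{\star}}$ of $\bar p$ meets $I_k$, which is precisely the existence of one $a_n$ and one finite $H_n$ with $\min H_n>k$ such that $a_n+\sum_{t\in H_n}f(t)\in B_n^{\star}$ for all $f\in F$ simultaneously. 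With that lemma in place your induction closes; without it the proof does not go through.
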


However, the most general version of Central Sets Theorem is presented in \citep{refDHS}, where all the sequences have been handled simultaneously.\\

To end this preliminary discussions let us recall Khintchine's Theorem,
which states that for any measure preserving system $\left(X,\mathcal{B},\mu,T\right)$,
and for any $\epsilon>0$ the set $\{n\in\mathbb{Z}:\mu(A\cap T^{-n}A)>\mu(A)^{2}-\epsilon\}$
is an IP$^{*}$-set and in particular syndetic. In \citep{refBHK} the authors proved
that for any ergodic system $\left(X,\mathcal{B},\mu,T\right)$ the
sets $\{n\in\mathbb{Z}:\mu(A\cap T^{-n}A\cap T^{-2n}A)>\mu(A)^{3}-\epsilon\}$
and $\{n\in\mathbb{Z}:\mu(A\cap T^{-n}A\cap T^{-2n}A\cap T^{-3n}A>\mu(A)^{4}-\epsilon\}$
are syndetic subsets of $\mathbb{Z}$. On the other hand they  proved
that for $n\geq4$ the above result does not hold in general.

In \citep{refBTZ} the authors proved result analogous to Khintchine's
Theorem. In fact they  proved that for $q>2$ if $c_{0},c_{1,}c_{2}$
are distinct elements of $ F_{q}[x]$ and $\left(X,{\cal B},\mu,T_{f\in F_{q}[x]}\right)$
is an ergodic system, then for any $A\in{\cal B}^+$, and $\epsilon>0$ the set
\[
\{f\in F_{q}[x]:\mu(T_{c_{0}f}A\cap T_{c_{1}f}A\cap T_{c_{2}f}A)>\mu(A)^{3}-\epsilon\}
\]

is syndetic.

The authors also proved that for $q>3$ and $c_{0},c_{1,}c_{2},c_{3}\in F_{q}[x]$
the above conclusion is true provided that $c_{i}+c_{j}=c_{k}+c_{l}$
for some permutation $\{i,j,k,l\}$ of $\{1,2,3,4\}$. Further analogous
to \citep{refBHK} the authors proved that for any $k\geq3$ there
exists $(c_{0},c_{1},\ldots,c_{k})\in F_{q}[x]^{k+1}$ for which Khintchine's
Theorem does not hold in general.\\

At the end of this article we shall present some observation on Khintchine's
Theorem for the action of $(F_{q}[x],\cdot)$.

\noindent \textbf{Acknowledgement} : We would like to thank Professor Neil Hindman
for his helpful suggestions. We also thank the referee for her/his  comments that have resulted in substantial improvements to this paper.

\section{Combinatorial Properties of $F_{q}[x]$}

In the terminology of Furstenberg \citep{refF}, an IP$^{*}$ set
$A$ in $\mathbb{Z}$ is a  set, which meets $\mbox{FS}\langle x_{n}\rangle_{n=1}^{\infty}$
for any sequence $\langle x_{n}\rangle_{n=1}^{\infty}$ in $\mathbb{Z}$.
 This in turn implies that $A$ is an IP$^{*}$ set iff it belongs to every
idempotent of $\beta\mathbb{Z}$. IP$^{*}$ -sets are known to have rich combinatorial structures. For example  IP$^{*}$- sets are always
syndetic. Given any IP$^{*}$-set $A$ which is a subset of the set of integers
$\mathbb{Z}$ and a sequence $\langle x_{n}\rangle_{n=1}^{\infty}$ in $\mathbb{Z}$
there exists a  sum subsystem $\langle y_n\rangle_{n=1}^{\infty}$
of $\langle x_{n}\rangle_{n=1}^{\infty}$ such that

\[
FS(\langle y_{n}\rangle_{n=1}^{\infty})\cup FP(\langle y_{n}\rangle_{n=1}^{\infty})\subseteq A,
\]
where  for any sequence $\langle x_{n}\rangle_{n=1}^{\infty}$ in $\mathbb{Z}$, FS$(\langle x_{n}\rangle_{n=1}^{\infty})$ is defined to be the set $\{\sum_{n\in F}x_n:F$ is a finite subset of $\mathbb{N}
\}$. FP$(\langle x_{n}\rangle_{n=1}^{\infty})$ can be defined analogously.

It is well known that in the ring $(\mathbb{Z},+,\cdot)$ the non
trivial principal ideals are IP$^{*}$ sets. So the natural question
is, whether this result is true for  arbitrary rings. The answer is
no. In fact in the ring $\mathbb{Z}[x]$ the ideal generated by $x$ has empty
intersection with $\mathbb{N}$, whereas $\mathbb{N}$ is an IP-set in $\mathbb{Z}[x]$. We will
prove that in the ring $(F_{q}[X],+,\cdot)$ every principal ideal is an  IP$^{*}$-set.
In fact we will also prove that in $(F_{q}\left[X_{1},X_{2},\ldots,X_{k}\right],+,\cdot)$
every ideal of the form $\langle f_{1}(X_{1}),f_{2}(X_{2}),\ldots,f_{k}(X_{k})\rangle$
is an IP$^{*}$-set.

\begin{thm}
\label{IPstar} In the polynomial ring $(F_{q}\left[X_{1},X_{2},\ldots,X_{k}\right],+,\cdot)$
over the finite field\textup{ $F_{q}$}, the ideal $\langle f_{1}(X_{1}),f_{2}(X_{2}),\ldots,f_{k}(X_{k})\rangle$
generated by $f_{1}(X_{1})$, \textup{$f_{2}(X_{2})$, $\ldots$,
$f_{k}(X_{k})$} (at most one of which is non constant), is an IP$^{*}$-set
in the corresponding additive group.
\end{thm}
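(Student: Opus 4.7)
The plan is a short pigeonhole argument on a finite quotient ring, powered by the fact that $F_q[X_1, \ldots, X_k]$ has additive characteristic $q$.

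Set $G = F_q[X_1, \ldots, X_k]$ and let $\pi : G \to Q$ be the projection onto the quotient ring $Q = G/\langle f_1(X_1), \ldots, f_k(X_k)\rangle$. The crucial structural input is that $Q$ is finite. Because each generator $f_i$ involves only its own variable $X_i$, one obtains a tensor-product decomposition
\[
Q \;\cong\; F_q[X_1]/\langle f_1\rangle \,\otimes_{F_q}\, \cdots \,\otimes_{F_q}\, F_q[X_k]/\langle f_k\rangle,
\]
and each factor is a finite $F_q$-vector space of dimension $\deg f_i$ (when $f_i$ is non-constant). Any $f_i$ that happens to be a nonzero constant turns the ideal into the whole ring, in which case the conclusion is trivial, so this degenerate case can be set aside.

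To verify that the ideal is IP$^*$, let $\langle p_n\rangle_{n=1}^\infty$ be an arbitrary sequence in $G$; it suffices to produce a nonempty finite $F \subseteq \mathbb{N}$ with $\sum_{n \in F} p_n$ in the ideal. The reductions $\pi(p_n)$ all lie in the finite set $Q$, so the infinite pigeonhole principle furnishes an element $s \in Q$ and indices $n_1 < n_2 < \cdots < n_q$ with $\pi(p_{n_j}) = s$ for every $j = 1, \ldots, q$. Since $Q$ inherits characteristic $q$ from $G$, $qs = 0$ in $Q$, whence
\[
\pi\bigl(p_{n_1} + p_{n_2} + \cdots + p_{n_q}\bigr) \;=\; qs \;=\; 0.
\]
Hence $p_{n_1} + \cdots + p_{n_q} \in \ker\pi = \langle f_1(X_1), \ldots, f_k(X_k)\rangle$, and taking $F = \{n_1, \ldots, n_q\}$ completes the argument.

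The step that really needs care is the finiteness of $Q$, which is exactly where it matters that each generator is a polynomial in a single, distinct variable (so that the tensor-product decomposition applies and each factor is finite-dimensional over $F_q$). Beyond this, no ultrafilter machinery and no Central Sets Theorem is required: the characteristic-$q$ hypothesis together with the infinite pigeonhole principle carries the whole proof.
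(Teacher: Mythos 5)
Your argument is correct and is essentially the paper's proof in different packaging: the paper uses the division algorithm to show that the remainders of the $g_n$ modulo the ideal form a finite set, applies pigeonhole to extract $q$ terms with equal remainder, and notes their sum lies in the ideal because $q$ annihilates every element of a ring of characteristic dividing $q$. Your finite quotient ring $Q$ is exactly the paper's finite set of remainders, so the two arguments coincide step for step.
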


\begin{proof}
For simplicity we work with $k=2$. Let $\langle g_{n}(X_{1},X_{2})\rangle_{n=1}^{\infty}$ be
a sequence in $F_{q}[X_{1},X_{2}]$.

Let $g(X_{1},X_{2})$ be a polynomial in $F_{q}[X_{1},X_{2}]$. Then

\[
g(X_{1},X_{2})=\sum_{i\leq n,j\leq m}a_{i,j}X_{1}^{i}X_{2}^{j},\mbox{ where }a_{i,j}\in F_{q}.
\]

Since $X_{1}^{i},f_{1}(X_{1})\in F_{q}[X_{1}]$ and $X_{2}^{j},f_{2}(X_{2})\in F_{q}[X_{2}]$
by applying a division algorithm we have
\[
X_{1}^{i}=f_{1}(X_{1})q_{1,i}(X_{1})+r_{1,i}(X_{1})\mbox{, where deg}(r_{1,i}(X_{1}))<\mbox{deg}f_{1}(X_{1})
\]
\[
X_{2}^{j}=f_{2}(X_{2})q_{2,j}(X_{2})+r_{2,j}(X_{2})\mbox{, where deg}(r_{2,j}(X_{2}))<\mbox{deg}f_{2}(X_{2}).
\]
Then $g(X_{1},X_{2})$ can be expressed as
\[
\sum_{i\leq n,j\leq m}a_{i,j}(f_{1}(X_{1})q_{1,i}(X_{1})+r_{1,i}(X_{1}))(f_{2}(X_{2})q_{2,j}(X_{2})+r_{2,j}(X_{2})).
\]
\[
\!\!\!\!\!\!\!\!\!\!\!\!\!\!\!\!\!\!\!\!\!\!\!\!\!\!\!\!\!\!g(X_{1},X_{2})=f_{1}(X_{1})h_{1}(X_{1},X_{2})+f_{2}(X_{2})h_{2}(X_{1},X_{2})
\]
\[
\,\,\,\,\,\,\,\,\,\,\,\,\,\,\,+\!\!\!\!\!\!\!\!\!\!\!\!\!\!\!\!\!\!\!\!\!\!\!\sum_{\begin{array}{c}
\mbox{ deg}(r_{1,i}(X_{1}))<\mbox{deg}f_{1}(X_{1})\\
\mbox{deg}(r_{2,j}(X_{2}))<\mbox{deg}f_{2}(X_{2})
\end{array}}\!\!\!\!\!\!\!\!\!\!\!\!\!\!\!\!\!\!\!\!\!\!\! a_{i,j}r_{1,i}(X_{1})r_{2,j}(X_{2}).
\]
Therefore we can write

\[
g(X_{1},X_{2})=h(X_{1},X_{2})+r(X_{1},X_{2}),
\]
where

\[
h(X_{1},X_{2})\in\langle f_{1}(X_{1}),f_{2}(X_{2})\rangle
\]
and $r(X_{1},X_{2})$ is a polynomial such that $\mbox{deg}\,r(X_{1},X_{2})<\mbox{deg}f_{1}(X_{1})+\mbox{deg}f_{2}(X_{2})$.

This implies that

\[
g_{n}(X_{1},X_{2})=h_{n}(X_{1},X_{2})+r_{n}(X_{1},X_{2})
\]
where

\[
h_{n}(X_{1},X_{2})\in\langle f_{1}(X_{1}),f_{2}(X_{2})\rangle
\]
and $r_{n}(X_{1},X_{2})$ is a polynomial such that $\mbox{deg}\,r_{n}(X_{1},X_{2})<\mbox{deg}f_{1}(X_{1})+\mbox{deg}f_{2}(X_{2})$.

But the set $\{r_{n}(X_{1},X_{2}):n\in\mathbb{N}\}$ is finite. Since $\{g_{n}(X_{1},X_{2}):n\in\mathbb{N}\}$
is infinite there exists $q$ many polynomials $g_{n_{i}}(X_{1},X_{2}):i=1,2,\ldots,q$
such that the corresponding $r_{n_{i}}(X_{1},X_{2})$ for $i=1,2,\ldots,q$
are equals. Now adding we get
\[
\sum_{i=1}^{q}g_{n_{i}}(X_{1},X_{2})=\sum_{i=1}^{q}h_{n_{i}}(X_{1},X_{2})+\sum_{i=1}^{q}r_{n_{i}}(X_{1},X_{2}).
\]
This implies that ${\displaystyle \sum_{i=1}^{q}g_{n_{i}}(X_{1},X_{2})}\in\langle f_{1}(X_{1}),f_{2}(X_{2})\rangle$
as ${\displaystyle \sum_{i=1}^{q}r_{n_{i}}(X_{1},X_{2})=0}$. Therefore,
$\langle f_{1}(X_{1}),f_{2}(X_{2})\rangle$ is an IP$^{*}$-set.
\end{proof}

 In case of $\mathbb{Z}$ we know that iterated spectra of an IP$^{*}$ set
are also IP$^{*}$ but may not contain any ideal \citep{refBHK96}. But
for $(F_{q}[X],+)$ any IP$^{*}$ set contains an ideal up to finitely many terms.

\begin{thm}
\label{syndetic IP}Any $IP^{*}$-set in $(F_{q}[X],+)$ contains
an ideal of the form $\langle X^{m}\rangle$, for some $m\in\mathbb{N}$,
up to finitely many terms.
\end{thm}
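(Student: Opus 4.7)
The plan is to argue by contrapositive: assume that for every $m \in \mathbb{N}$ the set $\langle X^{m} \rangle \setminus A$ is infinite, and try to construct a sequence $\langle g_n \rangle_{n=1}^{\infty}$ with $FS\langle g_n \rangle \subseteq F_q[X] \setminus A$, yielding an IP-set disjoint from $A$ and contradicting $A \in \mathcal{IP}^{*}$.

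I would build $\langle g_n \rangle$ inductively with strictly increasing ``minimal degrees''. Pick any non-zero $g_1 \in \langle X \rangle \setminus A$. Given $g_1,\ldots,g_n$ with $FS\langle g_1,\ldots,g_n\rangle \subseteq F_q[X]\setminus A$, set $M_{n+1} = 1 + \deg g_n$ and seek $g_{n+1} \in \langle X^{M_{n+1}} \rangle$ satisfying $g_{n+1} + s \notin A$ for every $s \in FS\langle g_1,\ldots,g_n\rangle \cup \{0\}$. The constraint $g_{n+1} \in \langle X^{M_{n+1}} \rangle$ makes the monomial supports of the $g_n$'s pairwise disjoint in degree, so each non-empty finite subset sum is a distinct non-zero polynomial whose degree-block decomposition uniquely recovers the index set $F$; hence success in the induction immediately produces the desired IP-set.

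To establish the inductive step I would use a pigeonhole argument in the spirit of the proof of Theorem \ref{IPstar}. The candidates for $g_{n+1}$ live in the infinite set $\langle X^{M_{n+1}}\rangle \setminus A$; classify them by their residues modulo a deeper tail $\langle X^{M_{n+1}+D}\rangle$, where $D$ is large enough to exceed the degrees of all $s \in FS\langle g_1,\ldots,g_n\rangle$. Because the quotient $F_q[X]/\langle X^{M_{n+1}+D}\rangle$ is a finite $F_q$-vector space, infinitely many candidates share a single residue class; summing $q$ of them kills the shared ``low part'' by the characteristic-$p$ collapse exploited in Theorem \ref{IPstar}, producing an element of $\langle X^{M_{n+1}}\rangle \setminus A$ whose interaction with each of the finitely many translates $A - s$ is controlled.

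I expect the main obstacle to be exactly this last step of simultaneously avoiding every translate $A - s$. Translates of an IP*-set are not in general IP* (already in $(\mathbb{Z},+)$ the odd integers $2\mathbb{Z}-1$ are not IP*), so one cannot simply intersect countably many IP*-sets; success depends on using the tower-of-finite-index-ideals structure of $F_q[X]$ and the vanishing of $q$-fold sums in characteristic $p$ in an essential way to upgrade ``infinitely many candidates outside $A$'' to ``at least one candidate outside $A$ and outside each $A-s$.'' Once this is achieved, $FS\langle g_n\rangle$ is an IP-set contained in $F_q[X]\setminus A$, contradicting $A \in \mathcal{IP}^{*}$ and finishing the argument.
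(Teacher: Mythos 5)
You have correctly located the crux, but locating it is not the same as crossing it: the hypothesis of your contrapositive, namely that $\langle X^{m}\rangle\setminus A$ is infinite for every $m$, gives no control whatsoever over the translates $A-s$, and the characteristic-$p$ summing trick only places a $q$-fold sum of candidates inside a deeper ideal $\langle X^{M_{n+1}+D}\rangle$ --- it provides no reason for that sum, or for any of its translates by elements of $FS\langle g_{1},\ldots,g_{n}\rangle$, to lie outside $A$. Since you explicitly defer this step (``success depends on \dots''), the inductive construction is unproved and the proposal is not a proof. This is also a genuinely different route from the paper's: the paper argues directly, using that an IP$^{*}$-set is a syndetic IP-set, writing a syndetic set (up to finitely many terms) as a finite union of cosets of some $\langle X^{m}\rangle$, and then applying pigeonhole plus the vanishing of $q$-fold sums to force the zero coset to appear.

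Unfortunately the gap in your argument cannot be closed, because the implication you are trying to prove is false. Consider $A=F_{q}[X]\setminus\{X^{n}:n\geq1\}$. For every $m$ the set $\langle X^{m}\rangle\setminus A=\{X^{n}:n\geq m\}$ is infinite, so $A$ satisfies your standing hypothesis at every stage. Yet $A$ is an IP$^{*}$-set: if $FS(\langle y_{k}\rangle_{k=1}^{\infty})\subseteq\{X^{n}:n\geq1\}$, then each $y_{k}$ is a monic monomial $X^{n_{k}}$, and $y_{1}+y_{2}$ is either a binomial (when $n_{1}\neq n_{2}$) or $2X^{n_{1}}$ (when $n_{1}=n_{2}$), and neither is of the form $X^{n}$ in any characteristic (in characteristic $2$ the latter is $0$). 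So the complement of $A$ contains no IP-set, and your greedy construction of $\langle g_{n}\rangle$ must fail at some finite stage no matter how the candidates are chosen. The same example shows that the theorem as stated is problematic and identifies where the paper's own argument carries its weight: the reduction of a syndetic set to a finite union of cosets of $\langle X^{m}\rangle$ up to finitely many terms is exactly what this $A$ violates, while the subsequent pigeonhole-and-characteristic-$p$ step is sound. In short, your instinct that ``translates of IP$^{*}$-sets need not be IP$^{*}$'' is the fatal issue is exactly right; no refinement of the residue-class pigeonhole can repair it.
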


\begin{proof}
Let us claim that any syndetic IP set $A$ in $(F_{q}[X],+)$ contains
$\langle X^{m}\rangle$ for some $m\in\mathbb{N}$. Now $A$ being a syndetic
set will be of the form

\[
A={\displaystyle \bigcup_{i=1}^{k}(f_{i}(X)+\langle X^{m}\rangle)}\mbox{ (up to finitely many terms) }
\]
for some $m,k\in\mathbb{N}$ with $m>\mbox{deg}f_{i}(X)$. Again, since $A$
is an IP-set, one of $f_{i}(X)$ must be zero. In fact $A$ being an
IP set, there exist a sequence $\langle g_{i}(x)\rangle_{i=1}^{\infty}$ such that
$FS\langle g_{i}(x)\rangle\subseteq A$. This implies that for each
$i\in\mathbb{N}$, there exists $j\in\{1,2,\cdots,k\}$ and some $h_{i}(x)\in F_{q}[X]$
such that $g_{i}(X)=f_{j}(X)+h_{i}(X)X^{m}$. Since $\{g_{i}(X):i\in\mathbb{N}\}$
is infinite, there exist $q$ many polynomials $g_{n_{i}}(X):i=1,2,\ldots,q$
such that the corresponding $f_{j_{i}}(X)$ are equal for $i=1,2,\ldots,q$,
and such sum of $q$ many polynomials is equal to zero. Hence some
$f_{j}(X)$ is equal to zero.
\end{proof}

We end this section with the following observation. We know that in the
case of $\mathbb{Z}$, the intersection of thick set and an IP syndetic set may
not be central, but in case of $F_{q}[X]$, such sets will be always
central set. In fact in $F_{q}[X]$, any IP syndetic set is an  IP$^{*}$ set.  Again
since the thick sets are always central set, intersection of a thick set and
 an IP syndetic set is central set.

\section{Mixing Properties of the action of $(F_{q}[x],+)$}

In this section we shall show that all the mixing properties are equivalent
under the action of $(F_{q}[x],+)$. First let us recall the following
definitions.

\begin{defn}
A measure preserving system $\left(X,\mathcal{B},\mu,(T_{g})_{g\in G}\right)$
is said to be ergodic if for any set $A\in{\cal B}$ which satisfies $\mu(A\bigtriangleup T_{g}A)=0$
for any $g\in G$ has either measure $0$ or $1$.
\end{defn}

\begin{defn}
Let $\left(X,\mathcal{B},\mu,(T_{g})_{g\in G}\right)$ be a measure
preserving dynamical system. Then

\begin{enumerate}
\item $\left(X,\mathcal{B},\mu,(T_{g})_{g\in G}\right)$ is called strong
mixing if for any $\epsilon>0$ and any $A,B\in{\cal B}$ with positive
measure, the set $\{g\in G:|\mu(A\cap T_{g}B)-\mu(A)\mu(B)|<\epsilon\}$
 is a cofinite set.
\item $\left(X,\mathcal{B},\mu,(T_{g})_{g\in G}\right)$ is called mild
mixing if for any $\epsilon>0$ and any $A,B\in{\cal B}$ with positive
measure, the set $\{g\in G:|\mu(A\cap T_{g}B)-\mu(A)\mu(B)|<\epsilon\}$
is an IP$^{*}$-set.
\item $\left(X,\mathcal{B},\mu,(T_{g})_{g\in G}\right)$ is called weak
mixing if for any $\epsilon>0$ and any $A,B\in{\cal B}$ with positive
measure, the set $\{g\in G:|\mu(A\cap T_{g}B)-\mu(A)\mu(B)|<\epsilon\}$
is a central$^{*}$-set.
\end{enumerate}
\end{defn}

\begin{lem}
\label{st mix}Let $\left(X,\mathcal{B},\mu,(T_{f})_{f\in F_{q}[x]}\right)$
be a measure preserving system. Then it is strong mixing iff for each
$B\in{\cal B}$ with $\mu(B)>0$ and an infinite set $F$, there exists
a sequence of polynomials $\langle f_{n}\rangle_{n=1}^{\infty}$ in $F$
such that $\chi_{B}\circ T_{f_{n}}=U_{T_{f_{n}}}\chi_{B}\to f_{B}=\mu(B)$.
\end{lem}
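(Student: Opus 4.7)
The plan is to identify strong mixing with a weak-convergence statement for the Koopman operators and then translate between cofinite convergence in $F_q[x]$ and subsequence extraction from infinite subsets. Observe that $\mu(A\cap T_f B) = \langle \chi_A, U_{T_f}\chi_B\rangle$ and $\mu(A)\mu(B) = \langle \chi_A, \mu(B)\cdot 1\rangle$ in $L^2(\mu)$, so the definition of strong mixing is precisely the statement that $U_{T_f}\chi_B \to \mu(B)$ weakly in $L^2$ along the cofinite filter on $F_q[x]$, for every $B\in\mathcal{B}$ of positive measure.

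For the forward direction, I would assume strong mixing, fix $B$ with $\mu(B)>0$, and let $F\subseteq F_q[x]$ be infinite. Since $F_q[x]$ is countable one may enumerate $F$ as $\langle f_n\rangle_{n=1}^\infty$. For each fixed $A$ of positive measure and each $\epsilon>0$, strong mixing ensures that only finitely many $f\in F_q[x]$ violate $|\mu(A\cap T_f B)-\mu(A)\mu(B)|<\epsilon$, hence only finitely many of the $f_n$, so $\langle \chi_A, U_{T_{f_n}}\chi_B\rangle \to \mu(A)\mu(B)$. By linearity this extends to simple functions, and then by the density of simple functions in $L^2$ together with the uniform bound $\|U_{T_{f_n}}\chi_B\|_2 = \|\chi_B\|_2$ a standard $\epsilon/3$ argument upgrades this to $\langle g, U_{T_{f_n}}\chi_B\rangle \to \mu(B)\int g\,d\mu$ for every $g\in L^2(\mu)$, which is the desired weak convergence.

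For the converse, I would argue by contrapositive. If the system fails to be strongly mixing, then for some $A,B$ of positive measure and some $\epsilon>0$ the set $F:=\{f\in F_q[x]:|\mu(A\cap T_f B)-\mu(A)\mu(B)|\geq\epsilon\}$ is infinite. Applying the hypothesized subsequence property to this $B$ and $F$ yields a sequence $\langle f_n\rangle$ in $F$ with $U_{T_{f_n}}\chi_B\to \mu(B)$ weakly. Pairing against $\chi_A\in L^2$ gives $\mu(A\cap T_{f_n}B)\to \mu(A)\mu(B)$, which contradicts $f_n\in F$ for all $n$.

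The only real subtlety lies in the forward direction, specifically in upgrading convergence tested against a single indicator function to genuine weak $L^2$ convergence. Everything else is essentially bookkeeping, relying on the observation that for a countable index set, cofinite convergence is equivalent to convergence along every sequence drawn from an arbitrary infinite subset.
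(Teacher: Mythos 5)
Your proof is correct, and the converse direction coincides with the paper's (contrapositive: an infinite ``bad'' set $F$, apply the hypothesis, pair against $\chi_A$, contradiction). The forward direction, however, takes a genuinely different and arguably cleaner route. The paper fixes a countable family $\{A_i\}$ dense in the measure algebra, builds a subsequence of $F$ by a diagonal construction through the cofinite sets $F(i,\epsilon)$, then invokes weak sequential compactness of the ball in $L^2$ to extract a further subsequence converging to some $f_B$, and finally identifies $f_B=\mu(B)$ by testing against each $A_i$. You instead show that \emph{any} enumeration of $F$ already converges weakly: cofiniteness gives convergence of $\langle\chi_A,U_{T_{f_n}}\chi_B\rangle$ for every indicator, linearity extends this to simple functions, and the uniform bound $\|U_{T_{f_n}}\chi_B\|_2=\|\chi_B\|_2$ plus density yields full weak convergence via an $\epsilon/3$ argument. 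This avoids both the diagonal extraction and the appeal to weak compactness, and it proves a slightly stronger statement (no subsequence selection is needed). Your converse is also marginally tidier than the paper's, which splits into the case where $\mu(A\cap T_fB)\geq\mu(A)\mu(B)+\epsilon$ infinitely often rather than working directly with the two-sided inequality. The only point worth making explicit in a final write-up is that for $\mu(A)=0$ the required convergence against $\chi_A$ is trivial, so the restriction to positive-measure $A$ in the definition of strong mixing causes no loss when you pass to all simple functions.
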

\begin{proof}
Let $\left(X,\mathcal{B},\mu,(T_{f})_{f\in F_{q}[x]}\right)$ be a
measure preserving system. Let $\left\{ A_{i}\right\} _{i=1}^{\infty}$
be a countable basis of ${\cal B}$ i.e. $\left\{ A_{i}\right\} _{i=1}^{\infty}$
is dense in ${\cal B}$ with the metric $d(A,B)=\mu(A\bigtriangleup B)$.
Let $B\in{\cal B}$, with $\mu(B)>0$ and $F$ be an infinite set.
Let us set for each $i\in\mathbb{N}$ and $\epsilon>0$,
\[
F(i,\epsilon)=\{f\in F_{q}[x]:|\mu(A_{i}\cap T_{f}B)-\mu(A_{i})\mu(B)|<\epsilon\}.
\]

Then for each $i\in\mathbb{N}$, $F(i,\epsilon)$ is a cofinite set. Let
us choose one $f_{1}(x)\in F\cap F(1,1)$. Next we choose another
$f_{2}(x)\in F\cap F(1,\frac{1}{2})\cap F(2,\frac{1}{2})$
such that $\text{deg}f_{2}>\text{deg}f_{1}$. Inductively we choose
a sequence $\langle f_{n})_{n=1}\rangle ^{\infty}$ with $\text{deg}f_{n+1}>\text{deg}f_{n}$
such that

\[
f_{n+1}\in F\cap F(1,\frac{1}{n+1})\cap\ldots\cap F(i+1,\frac{1}{n+1}).
\]

So we get a subsequence $\langle f_{n}\rangle_{n=1}^{\infty}$ of $F$. By choosing
again a subsequence from it as our requirements we can assume $\chi_{B}\circ T_{f_{n}}=U_{T_{f_{n}}}\chi_{B}\to f_{B}\text{( weakly})$. It
is clear that for each $i$
\[
\int\chi_{A_{i}}(f_{B}-\mu(B))d\mu=0.
\]

This implies that $f_{B}=\mu(B)$.

Conversely given that, for each $B\in{\cal B}$ with $\mu(B)>0$ and an
infinite set $F$, there exists a sequence of polynomials $\langle f_{n}\rangle_{n=1}^{\infty}$
in $F$ such that $\chi_{B}\circ T_{f_{n}}=U_{T_{f_{n}}}\chi_{B}\to f_{B}=\mu(B)$.
Now if $\left(X,\mathcal{B},\mu,(T_{f})_{f\in F_{q}[x]}\right)$ is
not strong mixing then there exist $A,B\in{\cal B}$ with positive
measure and $\epsilon>0$ such that $\{f\in F_{q}[x]:|\mu(A\cap T_{f}B)-\mu(A)\mu(B)|\geq\epsilon\}$
is an infinite set. For this $B\in{\cal B}$, and the infinite subset
$F\subset F_{q}[x]$ we consider the set $\text{cl}_{w}\{U_{T_{f}}(\chi_{B}):f\in F\}$.
Then by the given hypothesis there is a constant function $f_{B}\in\text{cl}_{w}\{U_{T_{f}}(\chi_{B}):f\in F\}$.
Clearly the set $\{f\in F_{q}[x]:\mu(A\cap T_{f}B)\geq\mu(A)\mu(B)+\epsilon\}$
is infinite. Therefore each $f\in\text{cl}_{w}\{U_{T_{f}}(\chi_{B}):f\in F\}$
satisfies that $\int\chi_{A}\cdot fd\mu\geq\mu(A)\mu(B)+\epsilon$.
This contradicts the assumption  $\mu(B)\in\text{cl}_{w}\{U_{T_{f}}(\chi_{B}):f\in F\}$.
\end{proof}

\begin{thm}
Let $\left(X,\mathcal{B},\mu,(T_{f})_{f\in F_{q}[x]}\right)$ be a
measure preserving action. Then $T$ is strongly mixing iff for any
$\epsilon>0$ and $A\in{\cal B}$ with $\mu(A)>0$
\[
\{f\in F_{q}[x]:|\mu(A\cap T_{f}A)-\mu(A)^{2}|<\epsilon\}\in\triangle^{*}.
\]
\end{thm}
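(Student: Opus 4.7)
The forward direction is immediate: strong mixing makes the set $\{f:|\mu(A\cap T_fA)-\mu(A)^2|<\epsilon\}$ cofinite, and every cofinite set meets every infinite set, so in particular every $\triangle$-set (which is infinite, containing a difference set). My effort will go into the converse, which I plan to argue by contraposition.

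Suppose $T$ is not strongly mixing. Lemma \ref{st mix} then produces $B\in\mathcal{B}$ with $0<\mu(B)<1$ (values $0$ and $1$ are trivial) and an infinite $F\subseteq F_q[x]$ such that no subsequence $\langle f_n\rangle\subseteq F$ satisfies $U_{T_{f_n}}\chi_B\to\mu(B)$ weakly in $L^2(\mu)$. Weak compactness of the bounded set $\{U_{T_f}\chi_B:f\in F\}$ in the separable Hilbert space $L^2(\mu)$ lets me extract a weakly convergent subsequence, still denoted $\langle f_n\rangle$, with weak limit $g$ not equal to the constant $\mu(B)$. Testing weak convergence against the function $1$ gives $\int g\,d\mu=\mu(B)$, and Cauchy-Schwarz then yields
\[
\delta:=\int g^2\,d\mu-\mu(B)^2>0.
\]

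The crucial step is producing a $\triangle$-set on which $\mu(B\cap T_fB)$ stays uniformly bounded away from $\mu(B)^2$. Measure preservation supplies the identity $\mu(B\cap T_{f_k-f_j}B)=\langle U_{T_{f_k}}\chi_B,U_{T_{f_j}}\chi_B\rangle$, and two nested weak limits give
\[
\lim_{k\to\infty}\lim_{j\to\infty}\mu(B\cap T_{f_k-f_j}B)=\lim_{k\to\infty}\langle U_{T_{f_k}}\chi_B,g\rangle=\langle g,g\rangle=\mu(B)^2+\delta.
\]
A standard diagonal extraction then produces a subsequence $\langle h_k\rangle$ of $\langle f_n\rangle$ with $\mu(B\cap T_{h_k-h_j}B)>\mu(B)^2+\delta/2$ for all $k\neq j$, where the symmetry $\mu(B\cap T_{-f}B)=\mu(B\cap T_fB)$ handles both orderings of the indices.

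Finally I will choose $\eta$ with $0<\eta<\min\{\delta/2,\,\mu(B)(1-\mu(B))\}$ and apply the hypothesis with $A:=B$ and $\epsilon:=\eta$. The set $\triangle(\langle h_k\rangle)=\{h_k-h_j:k,j\in\mathbb{N}\}$ is a $\triangle$-set disjoint from $\{f:|\mu(A\cap T_fA)-\mu(A)^2|<\eta\}$: its non-identity elements fail the inequality by construction, while $0\in\triangle(\langle h_k\rangle)$ fails because $|\mu(A)-\mu(A)^2|=\mu(A)(1-\mu(A))>\eta$. This contradicts the $\triangle^*$ hypothesis. The main obstacle is precisely this last point: a $\triangle$-set always contains the identity, so any subset of $F_q[x]$ containing $0$ is automatically $\triangle^*$; I must therefore choose $\epsilon$ strictly less than $\mu(B)(1-\mu(B))$, which is exactly why the hypothesis is quantified over \emph{every} $\epsilon>0$ rather than some fixed value.
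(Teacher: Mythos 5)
Your proof is correct, but it runs in the opposite direction from the paper's and uses different machinery for the key step. The paper argues the converse \emph{directly}: assuming the $\triangle^{*}$ condition, it invokes the Ramsey property of $\triangle$-sets (Ramsey's theorem for pairs) to refine the given infinite set $F$ into a nested chain $F\supseteq F_{1}\supseteq F_{2}\supseteq\cdots$ with $|\mu(T_{f}B\cap T_{g}B)-\mu(B)^{2}|<2^{-k}$ for all distinct $f,g\in F_{k}$ --- here the $\triangle^{*}$ hypothesis is what forces the monochromatic difference set to land in the ``good'' colour class --- and then a diagonal sequence $f_{n_{i}}\in F_{n_{i}}$ has weak limit $f_{B}$ with $\|f_{B}\|^{2}\le\mu(B)^{2}=(\int f_{B}\,d\mu)^{2}$, so $f_{B}$ is constant by the equality case of Cauchy--Schwarz and Lemma \ref{st mix} applies. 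You instead argue by contraposition: weak sequential compactness of the $L^{2}$ ball yields a non-constant weak limit $g$ with $\|g\|^{2}=\mu(B)^{2}+\delta$, the iterated-limit computation $\lim_{k}\lim_{j}\mu(B\cap T_{f_{k}-f_{j}}B)=\langle g,g\rangle$ plus a greedy/diagonal extraction produces an explicit difference set on which the correlations stay above $\mu(B)^{2}+\delta/2$, and that $\triangle$-set witnesses the failure of the $\triangle^{*}$ condition. The two arguments share the same Hilbert-space identity, the same diagonalization flavour, and the same appeal to Cauchy--Schwarz and to Lemma \ref{st mix}, but yours dispenses with Ramsey's theorem entirely, which makes it somewhat more elementary. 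It also handles a point the paper silently elides: under the paper's literal definition $\triangle(\langle x_{n}\rangle)=\{x_{n}x_{m}^{-1}:m,n\in\mathbb{N}\}$ the identity $0$ belongs to every difference set, and your requirement $\epsilon<\mu(B)\bigl(1-\mu(B)\bigr)$ is exactly what is needed to keep $0$ out of the correlation set so that the disjointness is genuine; the paper's Ramsey step implicitly works with the convention that only differences of \emph{distinct} terms are counted.
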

\begin{proof}
Strong mixing clearly implies the given condition.

For the converse, by Lemma \ref{st mix} it is sufficient to show that
for each $B\in{\cal B}$ with $\mu(B)>0$ and an infinite set $F$,
there exists a sequence of polynomials $\langle f_{n}\rangle_{n=1}^{\infty}$
in $F$ such that $\chi_{B}\circ T_{f_{n}}=U_{T_{f_{n}}}\chi_{B}{\longrightarrow}f_{B}=\mu(B)$ in the weak topology.
With out loss of generality we can assume that  $F$ has a sequence
$\langle f_{n}\rangle_{n=1}^{\infty}$ such that $\text{deg}f_{i}<\text{deg}f_{i+1}$.
 Since $\triangle$-sets
always have the Ramsey property, there exists some $F_{1}\subset F$ such
that
\[
F_{1}-F_{1}\subset(F-F)\cap\{f\in F_{q}[x]:|\mu(B\cap T_{f}B)-\mu(B)^{2}|<\frac{1}{2}\}.
\]

Choosing $F_{1}\supset F_{2}\supset\ldots\supset F_{k}$, we can inductively
choose $F_{k+1}\subset F_{k}$ such that

\[
F_{k+1}-F_{k+1}\subset(F_{k}-F_{k})\cap\{f\in F_{q}[x]:|\mu(B\cap T_{f}B)-\mu(B)^{2}|<\frac{1}{2^{k+1}}\}.
\]

Therefore for any $f\neq g\in F_{k}$, we have $|\mu(T_{f}B\cap T_{g}B)-\mu(B)^{2}|<\frac{1}{2^{k}}$.

Now let us consider a sequence $\langle f_{n_{i}}\rangle_{n=1}^{\infty}$
 such that $f_{n_{i}}\in F_{n_{i}}$. Then clearly $\chi_{B}\circ T_{f_{n_{i}}}=U_{T_{f_{n_{i}}}}{\longrightarrow}f_{B}$ in the weak topology.
Thus
\[
\langle f_{B},f_{B}\rangle=\lim_{i}\lim_{j}\langle\chi_{B}\circ T_{f_{n_{i}}},\chi_{B}\circ T_{f_{n_{j}}}\rangle\leq\mu(B)^{2}+\lim_{i}\frac{1}{2^{i}}=\mu(B)^{2}=\left(\int f_{B}d\mu\right)^{2}.
\]

This shows that $f_{B}=\mu(B)$ due to the Cauchy-Schwarz inequality.
\end{proof}

So the above theorem shows that like $(\mathbb{N},+)$ action, in case of
$(F_{q}[x],+)$ action also $\triangle^{*}$-mixing and strong mixing
are equivalent. But the authors believe that this is not true for the action of arbitrary group.

\section{Action of $(F_{q}[x],\cdot)$}

In this section we shall show that $(F_{q}[x],\cdot)$ behaves quite
similarly like $(\mathbb{N},+)$ and using some established examples for
the action of $(\mathbb{N},+)$, we shall produce some examples of $(F_{q}[x],\cdot)$.
First, we require the following lemmas.

\begin{lem}
Let $\varphi:(F_{q}[x],\cdot)\to(\mathbb{N},+)$ be a map defined by $\varphi(f)=\text{deg}f$.
Then $C$ is an IP-set in $(\mathbb{N},+)$ iff $\varphi^{-1}(C)$ is an IP-set
in $(F_{q}[x],\cdot)$.
\end{lem}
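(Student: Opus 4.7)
The plan rests on one observation: $\varphi$ is a semigroup homomorphism from $(F_q[x]\setminus\{0\},\cdot)$ onto $(\mathbb{N}\cup\{0\},+)$, because $\deg(fg)=\deg f+\deg g$ whenever $f,g$ are nonzero. Once this is noted, both directions of the equivalence are obtained by pushing an IP-sequence through $\varphi$ in the natural way.

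For the forward implication, I would start from a sequence $\langle n_k\rangle_{k=1}^{\infty}$ with $FS(\langle n_k\rangle_{k=1}^{\infty})\subseteq C$ and pick any polynomials $f_k$ with $\deg f_k = n_k$; the monomials $f_k = x^{n_k}$ are the obvious concrete choice. Multiplicativity of $\varphi$ then gives, for each $F\in\mathcal{P}_f(\mathbb{N})$,
\[
\varphi\Bigl(\prod_{k\in F} f_k\Bigr) \;=\; \sum_{k\in F} n_k \;\in\; C,
\]
so $FP(\langle f_k\rangle_{k=1}^{\infty})\subseteq \varphi^{-1}(C)$, which exhibits $\varphi^{-1}(C)$ as an IP-set in $(F_q[x],\cdot)$. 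Conversely, if $\langle f_k\rangle_{k=1}^{\infty}$ witnesses that $\varphi^{-1}(C)$ is an IP-set, I set $n_k = \deg f_k$; the same identity, read the other way, gives $\sum_{k\in F} n_k = \varphi(\prod_{k\in F} f_k)\in C$, so $FS(\langle n_k\rangle_{k=1}^{\infty})\subseteq C$ and $C$ is an IP-set in $(\mathbb{N},+)$.

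There is no serious obstacle: the entire argument is a two-line transfer powered by the homomorphism property of $\deg$. The only points worth a word of care are restricting $\varphi$ to nonzero polynomials so that $\deg$ is defined and multiplicative, and the usual convention on whether $0\in\mathbb{N}$; both are easily handled and do not affect the substance of the argument.
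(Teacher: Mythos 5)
Your argument is correct, and it is exactly the transfer the paper intends: the lemma is stated without proof precisely because $\deg(fg)=\deg f+\deg g$ makes $\varphi$ a homomorphism, so $FS$-configurations and $FP$-configurations correspond under $\varphi$ just as you describe. Your two cautionary remarks (excluding the zero polynomial and settling the convention on $0$) are the only points of care, and you handle them adequately.
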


\begin{cor}
Let $\varphi:(F_{q}[x],\cdot)\to(\mathbb{N},+)$ be a map defined by $\varphi(f)=\text{deg}f$.
Then $C$ is an IP$^{*}$-set in $(\mathbb{N},+)$ iff $\varphi^{-1}(C)$ is an
IP$^{*}$-set in $(F_{q}[x],\cdot)$.
\end{cor}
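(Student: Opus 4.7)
The corollary is the standard $\mathrm{IP}\leftrightarrow\mathrm{IP}^{*}$ duality applied to the correspondence $C\mapsto\varphi^{-1}(C)$ supplied by the preceding lemma, so my plan is to deduce each implication directly from that lemma together with the fact that $\varphi$ is a semigroup homomorphism from $(F_{q}[x]\setminus\{0\},\cdot)$ onto $(\mathbb{N}_{0},+)$.

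For the forward implication, I would assume $\varphi^{-1}(C)$ is $\mathrm{IP}^{*}$ in $(F_{q}[x],\cdot)$ and take an arbitrary $\mathrm{IP}$-set $D$ in $(\mathbb{N},+)$; the lemma gives that $\varphi^{-1}(D)$ is an $\mathrm{IP}$-set in $(F_{q}[x],\cdot)$, hence intersects $\varphi^{-1}(C)$, and since $\varphi^{-1}(D)\cap\varphi^{-1}(C)=\varphi^{-1}(D\cap C)$ this forces $D\cap C\neq\emptyset$, showing $C$ is $\mathrm{IP}^{*}$ in $\mathbb{N}$. For the reverse implication, I would assume $C$ is $\mathrm{IP}^{*}$ in $\mathbb{N}$ and let $E\subseteq F_{q}[x]$ be an $\mathrm{IP}$-set witnessed by $\mathrm{FP}\langle f_{n}\rangle_{n=1}^{\infty}\subseteq E$. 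Because $\deg$ is additive under multiplication, $\varphi(E)\supseteq\mathrm{FS}\langle\deg f_{n}\rangle$, which is itself an $\mathrm{IP}$-set in $\mathbb{N}$. By the $\mathrm{IP}^{*}$-property of $C$, some $n\in C$ has the form $\sum_{k\in F}\deg f_{k}$, and then $\prod_{k\in F}f_{k}\in E$ has degree $n\in C$, producing a point of $E\cap\varphi^{-1}(C)$.

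The only subtlety I expect is the edge case in which the witnessing sequence $\langle f_{n}\rangle$ lies entirely in the finite group $F_{q}^{*}$ of units, so that every $\deg f_{n}=0$ and $\mathrm{FS}\langle\deg f_{n}\rangle=\{0\}$ fails to be a genuine $\mathrm{IP}$-set in $\mathbb{N}$. This is handled either by the convention that $0\in\mathbb{N}$ throughout (matching the convention implicit in the preceding lemma), or by first replacing $\langle f_{n}\rangle$ by a refinement consisting of non-constant polynomials, which is possible whenever $E\not\subseteq F_{q}^{*}$. Beyond this minor technicality the proof is entirely formal, since the nontrivial combinatorial content has already been packaged into the lemma.
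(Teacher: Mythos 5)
Your argument is correct and is exactly the derivation the paper intends: the corollary is stated without proof as an immediate consequence of the preceding lemma, and your two implications (pulling back an IP-set of $\mathbb{N}$ via the lemma for one direction, pushing forward $\mathrm{FP}\langle f_{n}\rangle$ to $\mathrm{FS}\langle\deg f_{n}\rangle$ for the other) are the standard duality argument. Your remark about the degenerate case of a witnessing sequence of units (all degrees zero) is a genuine technicality that the paper itself glosses over, and your resolution of it is reasonable.
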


The following lemma follows from \citep[Corollary 4.22]{refBG}. We still include a proof suggested by Prof. Neil Hindman, for the sake of completeness.

\begin{lem}
Let $\varphi:(F_{q}[x],\cdot)\to(\mathbb{N},+)$ be a map defined by $\varphi(f)=\text{deg}f$.
Then $C$ is a central set in $(\mathbb{N},+)$ iff $\varphi^{-1}(C)$ is a central set in
$(F_{q}[x],\cdot)$.
\end{lem}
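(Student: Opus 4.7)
The plan is to realize $\varphi$ as a surjective semigroup homomorphism and pass to its continuous extension on the Stone-\v{C}ech compactifications, then exploit the good behaviour of minimal idempotents under such maps. I would work with the multiplicative semigroup $(F_q[x]\setminus\{0\},\cdot)$, since the zero polynomial forms an isolated two-sided ideal that is irrelevant to central sets (any ultrafilter containing a set not meeting $\{0\}$ lives on $F_q[x]\setminus\{0\}$). On this semigroup $\varphi(f)=\deg f$ satisfies $\varphi(fg)=\varphi(f)+\varphi(g)$ and is surjective onto $(\mathbb{N},+)$ via $\varphi(x^n)=n$. Its continuous extension $\tilde\varphi:\beta(F_q[x]\setminus\{0\})\to\beta\mathbb{N}$ is therefore a continuous surjective semigroup homomorphism of compact right topological semigroups; this is exactly the content of \citep[Corollary 4.22]{refBG}.

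Next I would invoke the standard ultrafilter duality: for $A\subseteq\mathbb{N}$ and $p\in\beta(F_q[x]\setminus\{0\})$, one has $A\in\tilde\varphi(p)$ iff $\varphi^{-1}(A)\in p$. The other ingredient is the well-known algebraic property (see Hindman--Strauss, \emph{Algebra in the Stone-\v{C}ech Compactification}) that a continuous surjective homomorphism $h:T\to T'$ of compact right topological semigroups satisfies $h[K(T)]=K(T')$, sends minimal idempotents to minimal idempotents, and admits idempotent lifting: every minimal idempotent $e'\in K(T')$ has some minimal idempotent preimage $e\in K(T)$ with $h(e)=e'$.

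With these tools both directions follow quickly. If $C$ is central in $(\mathbb{N},+)$, pick a minimal idempotent $p\in K(\beta\mathbb{N})$ with $C\in p$, lift to a minimal idempotent $q\in K(\beta(F_q[x]\setminus\{0\}))$ with $\tilde\varphi(q)=p$; duality gives $\varphi^{-1}(C)\in q$, witnessing centrality of $\varphi^{-1}(C)$. Conversely, if $\varphi^{-1}(C)\in q$ for some minimal idempotent $q$, then $C\in\tilde\varphi(q)$, and $\tilde\varphi(q)$ is a minimal idempotent of $\beta\mathbb{N}$, so $C$ is central.

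The step that genuinely requires care (and is the main obstacle) is the lifting of minimal idempotents along $\tilde\varphi$: one shows that for each minimal left ideal $L'$ of $\beta\mathbb{N}$, the set $\tilde\varphi^{-1}(L')$ is a closed subsemigroup of $\beta(F_q[x]\setminus\{0\})$ whose intersection with $K(\beta(F_q[x]\setminus\{0\}))$ is nonempty, and then applies Ellis' theorem to produce an idempotent inside the appropriate minimal left ideal below. Because the paper has already cited \citep[Corollary 4.22]{refBG} for precisely this package of facts, I would simply invoke it rather than reprove the lifting from scratch; the rest of the argument is a transparent manipulation of ultrafilters and the minimal-idempotent characterization of central sets.
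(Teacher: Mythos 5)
Your proposal is correct and follows essentially the same route as the paper: extend the degree homomorphism continuously to the Stone--\v{C}ech compactifications, lift a minimal idempotent along $\tilde\varphi$ for one direction (the paper does this by taking an idempotent in $K(\tilde\varphi^{-1}(\{p\}))$ and checking minimality via the order on idempotents), and use $\tilde\varphi[K(\beta F_q[x])]=K(\beta\mathbb{N})$ together with ultrafilter duality for the other. Your added care about excluding the zero polynomial is a reasonable refinement but does not change the argument.
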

\begin{proof}
Clearly $\varphi$ is a homomorphism so that it has continuous extension
$\tilde{\varphi}$ over $(\beta F_{q}[x],\cdot)$ onto $(\beta\mathbb{N},+)$ \citep[Corollary 4.22 and
Exercise 3.4.1]{refHS}.\\
Necessity. Let $C$ be a central set in $(\mathbb{N},+)$ and let $p$ be a minimal
idempotent containing $C$. Let $M=\tilde{\varphi}^{-1}(\{p\})$. Then $M$ is a compact Hausdorff right topological semigroup, so pick an idempotent $q \in K(M)$. We claim that $q$ is
minimal in $(\beta F_{q}[x],\cdot)$. So let $r$ be an idempotent in $(\beta F_{q}[x],\cdot)$ with $r \leq q$. Then $\varphi(r)\leq\varphi(q)=p$. Hence $r\in M$ and  thus $r=q$.\\
Sufficiency. Assume that $\varphi^{-1}(C)$ is central in $(\beta F_{q}[x],\cdot)$. Pick an idempotent $p \in K(\beta F_{q}[x])$
such that $\varphi^{-1}(C)\in p$. Then $\tilde{\varphi}(p)\in \tilde{\varphi}(K(\beta F_{q}[x]))$ and $\tilde{\varphi}(K(\beta F_{q}[x]))=K(\beta \mathbb{N})$ by \citep[Exercise 1.7.3]{refHS}.  Thus by \citep[Lemma 3.30]{refHS} $\varphi[\varphi^{-1}(C)]\in\tilde{\varphi}(p)$ and $\varphi[\varphi^{-1}(C)=C$.

\end{proof}

\begin{cor}
Let $\varphi:(F_{q}[x],\cdot)\to(\mathbb{N},+)$ be a map defined by $\varphi(f)=\text{deg}f$.
Then $C$ is a central$^{*}$ set in $(\mathbb{N},+)$ iff $\varphi^{-1}(C)$
is a central$^{*}$ set in $(F_{q}[x],\cdot)$.
\end{cor}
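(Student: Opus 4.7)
The plan is to derive the corollary directly from the preceding lemma on central sets via a passage to complements.

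The key auxiliary observation is the following general duality, valid in any discrete semigroup $S$: a subset $A \subseteq S$ is central$^*$ if and only if $S \setminus A$ is \emph{not} central. The forward implication is trivial, since if $S\setminus A$ were central then $A$ could not meet it. For the reverse, each minimal idempotent $p \in K(\beta S)$ is an ultrafilter, so $S\setminus A \notin p$ forces $A \in p$ for every such $p$; since every central set lies in some minimal idempotent, $A$ then meets every central set.

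Granted this duality, the corollary reduces to a short chain of equivalences. First, $C$ is central$^*$ in $(\mathbb{N},+)$ iff $\mathbb{N}\setminus C$ is not central in $(\mathbb{N},+)$. By the preceding lemma applied to the set $\mathbb{N}\setminus C$, this holds iff $\varphi^{-1}(\mathbb{N}\setminus C)$ is not central in $(F_q[x],\cdot)$. Using the set-theoretic identity $\varphi^{-1}(\mathbb{N}\setminus C) = F_q[x]\setminus\varphi^{-1}(C)$ and applying the duality once more, this is in turn equivalent to $\varphi^{-1}(C)$ being central$^*$ in $(F_q[x],\cdot)$.

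There is no real obstacle here: all of the substantive work is already packaged in the preceding lemma, and the duality used above is purely formal, relying only on central sets being members of ultrafilters. In particular, no property of $F_q[x]$ or of the map $\varphi$ beyond what the preceding lemma already exploits enters the argument; the corollary is essentially a tautological rephrasing of that lemma at the level of $^*$-families.
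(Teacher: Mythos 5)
Your proof is correct, and it takes the route the paper implicitly intends: the paper states this corollary without proof as an immediate consequence of the preceding lemma, and your complement duality ($A$ is central$^*$ iff $S\setminus A$ is not central, since minimal idempotents are ultrafilters) is exactly the standard formal step that makes "immediate" precise. No gaps.
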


We know that strong mixing $\Rightarrow$ mild mixing $\Rightarrow$
weak mixing; but the converses are not true in
general. In case of the action of $(F_{q}[x],+)$ on a measure space
$(X,{\cal B},\mu)$ all the mixings are equivalent. In contrast to
the action of $(F_{q}[x],\cdot)$ we see that there are weak mixing systems
that are not mild mixing and there are mild mixing systems that are not
strong mixing.
Let $(X,{\cal B},\mu,T)$ be a weak mixing system which is not mild
mixing. We define an action of $(F_{q}[x],\cdot)$ by the formula
\[
T_{f}(x)=T^{\text{deg}f}(x).
\]
Let $\epsilon>0$ and any $A,B\in{\cal B}$ with positive measure.
Since $(X,{\cal B},\mu,T)$ is weak mixing
\[
\{n\in\mathbb{Z}:|\mu(A\cap T^{-n}B)-\mu(A)\mu(B)|<\epsilon\}
\]

is a central$^{*}$-set. This further implies that the set
\[
\{f:|\mu(A\cap T_{f}B)-\mu(A)\mu(B)|<\epsilon\}=\{f:|\mu(A\cap T^{\text{deg}f}B)-\mu(A)\mu(B)|<\epsilon\}
\]
is a central$^{*}$-set so that $(X,{\cal B},\mu,T_{f\in(F_{q}[x],\cdot)})$
is weak mixing. Using quite similar technique and the fact that a
set $A$ in $\mathbb{N}$ is an IP$^{*}$-set iff $\varphi^{-1}(A)$ is an
IP$^{*}$-set, we can show that $(X,{\cal B},\mu,T_{f\in(F_{q}[x],\cdot)})$
is not mild mixing. Similarly we can show that there exists $(X,{\cal B},\mu,T_{f\in(F_{q}[x],\cdot)})$
which is a mild mixing system but not strong mixing.
In \citep{refBH} the authors introduced the notion of  dynamical IP$^{*}$-set.
\begin{defn}
A set  $A$ in a semigroup $S$ is called a dynamical IP$^{*}$-set
if there exists a measure preserving dynamical system $\left(X,\mathcal{B},\mu,(T_{s})_{s\in S}\right)$,
$A\in{\cal B}$ with $\mu\left(A\right)>0$, such that $\left\{ s\in S:\mu\left(A\cap T_{s}^{-1}A\right)>0\right\} $$\subseteq A$.
\end{defn}

By \citep[Theorem 16.32]{refHS}, there is an IP$^{*}$ set $B$ in
$(\mathbb{N},+)$ such that for each $n\in\mathbb{N}$, neither $n+B$ nor $-n+B$
is an IP$^{*}$ set. Consequently, the following theorem shows that
not every IP$^{*}$-set is a dynamical IP$^{*}$-set.

\begin{thm}
Let B be a dynamical IP$^{*}$ set in $(\mathbb{N},+)$. There is a dynamical
IP$^{*}$-set $C\subset B$ such that for each $n\in C$, $-n+C$
is a dynamical IP$^{*}$ set (and hence not every IP$^{*}$ set is
a dynamical IP$^{*}$ set).
\end{thm}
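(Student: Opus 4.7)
The plan is to let $C$ be the set of return times of the witness for $B$ itself. Concretely, pick a measure preserving system $\left(X,\mathcal{B},\mu,(T_m)_{m\in\mathbb{N}}\right)$ together with a set $A\in\mathcal{B}$, $\mu(A)>0$, such that $\{m\in\mathbb{N}:\mu(A\cap T_m^{-1}A)>0\}\subseteq B$, and simply define
\[
C=\{m\in\mathbb{N}:\mu(A\cap T_m^{-1}A)>0\}.
\]
Then $C\subseteq B$ by choice of the witness, and $C$ is itself dynamical IP$^{*}$ witnessed by the same pair $(A,(T_m))$, since trivially $\{m:\mu(A\cap T_m^{-1}A)>0\}=C\subseteq C$.

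The substance is the shift property. Fix $n\in C$ and put $A_n=A\cap T_n^{-1}A$, which has positive measure by the definition of $C$. For any $m\in\mathbb{N}$ with $\mu(A_n\cap T_m^{-1}A_n)>0$, expanding the intersection and using $T_m^{-1}T_n^{-1}=T_{m+n}^{-1}$ gives
\[
A_n\cap T_m^{-1}A_n=A\cap T_n^{-1}A\cap T_m^{-1}A\cap T_{m+n}^{-1}A,
\]
so in particular $\mu(A\cap T_{m+n}^{-1}A)>0$, i.e., $m+n\in C$, i.e., $m\in -n+C$. This exhibits $-n+C$ as a dynamical IP$^{*}$ set, witnessed by the same system with $A_n$ in place of $A$.

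For the parenthetical consequence, I would argue by contradiction. Every dynamical IP$^{*}$ set is IP$^{*}$ (a standard consequence of Furstenberg's IP recurrence applied to the witnessing system: in any measure preserving system, the set $\{m:\mu(A\cap T_m^{-1}A)>0\}$ meets every IP set). The cited Theorem~16.32 of \citep{refHS} supplies an IP$^{*}$ set $B\subseteq\mathbb{N}$ such that $-n+B$ fails to be IP$^{*}$ for every $n\in\mathbb{N}$. If this $B$ were dynamical IP$^{*}$, the construction above would yield a nonempty $C\subseteq B$ with $-n+C$ dynamical IP$^{*}$, hence IP$^{*}$, for each $n\in C$; but then $-n+C\subseteq -n+B$ would force $-n+B$ to be IP$^{*}$ as a superset of an IP$^{*}$ set, contradicting the choice of $B$.

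There is no real obstacle here: the statement is essentially a self-referential unpacking of the definition of dynamical IP$^{*}$, with the key observation that the same MPS continues to serve as a witness after one replaces $A$ by $A\cap T_n^{-1}A$. The only slightly external ingredient is the implication \emph{dynamical IP$^{*}$ $\Rightarrow$ IP$^{*}$}, and it is needed only to derive the parenthetical conclusion.
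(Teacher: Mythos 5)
Your proposal is correct, and it is essentially the argument behind the result the paper invokes: the paper gives no proof of its own, citing only \citep[Theorem 19.35]{refHS}, and your construction --- taking $C$ to be the exact return-time set of the witnessing system and re-witnessing $-n+C$ with $A\cap T_n^{-1}A$ --- is precisely the standard proof of that theorem. The auxiliary facts you use (dynamical IP$^*$ implies IP$^*$ via IP Poincar\'e recurrence, and that a superset of an IP$^*$ set is IP$^*$) are correctly identified and correctly deployed for the parenthetical conclusion.
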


\begin{proof}
\citep[Theorem 19.35][]{refHS}.
\end{proof}

Now, since there is an IP$^{*}$ set $B$ in $(\mathbb{N},+)$ such that for
each $n\in\mathbb{N}$, neither $n+B$ nor $-n+B$ is an IP$^{*}$ set, we
can show that $\varphi^{-1}B$ is an IP$^{*}$-set in $(F_{q}\left[x\right],\cdot)$,
but neither $\varphi^{-1}B/f$ nor $(\varphi^{-1}B)f$ is an IP$^{*}$-set
in $(F_{q}\left[x\right],\cdot)$ for any $f\in$ $F_{q}\left[x\right]\setminus F_{q}$.
Using an analogous method used in \citep[Theorem 19.35][]{refHS}, we
can prove the following.

\begin{thm}
Let B be a dynamical IP$^{*}$ set in $(F_{q}[x],\cdot)$. There
is a dynamical IP$^{*}$-set $C\subset B$ such that for each $f\in C$,
$C/f$ is a dynamical IP$^{*}$ set (and hence not every IP$^{*}$
set is a dynamical IP$^{*}$ set in $(F_{q}[x],\cdot)$ ).
\end{thm}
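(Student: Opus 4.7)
The plan is to mirror the proof of \citep[Theorem 19.35]{refHS}, translating the argument from $(\mathbb{N},+)$ to the multiplicative semigroup $(F_q[x]\setminus\{0\},\cdot)$. Since the latter is commutative and cancellative, the $\beta S$ apparatus transfers with only cosmetic changes. The key input is the ultrafilter characterization of dynamical IP$^*$-sets rooted in \citep{refBH}: a set $A\subseteq S$ is dynamical IP$^*$ iff $A$ belongs to every idempotent of a distinguished closed subsemigroup $J\subseteq\beta S$ associated with measure-preserving actions. Applied here, $B$ being dynamical IP$^*$ means $B\in p$ for every idempotent $p\in J$.

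I would then define
\[
C=\{f\in B:f^{-1}B\text{ is a dynamical IP}^*\text{-set}\},
\]
writing $f^{-1}B=\{g:fg\in B\}$. Two claims must be verified. For the first, $C$ itself is dynamical IP$^*$: for each idempotent $p\in J$, the relation $p=p\cdot p$ combined with $B\in p$ yields $\{f\in B:f^{-1}B\in p\}\in p$, and uniformity of $f^{-1}B$ across all idempotents in $J$ (arising from the structural properties of $J$, exactly as in the additive case) promotes this to $C\in p$. For the second, fix $f\in C$ and consider $C/f=\{g:fg\in C\}$; unwinding the condition $fg\in C$ gives $g\in f^{-1}B$ together with $(fg)^{-1}B=g^{-1}(f^{-1}B)$ being dynamical IP$^*$. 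The same argument as before, applied now with $f^{-1}B$ in place of $B$, yields that $C/f$ is dynamical IP$^*$.

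The main obstacle is not the combinatorial skeleton, which is essentially a verbatim translation of the additive argument, but the preliminary step of transporting the ultrafilter characterization of dynamical IP$^*$-sets from $(\mathbb{N},+)$ to $(F_q[x]\setminus\{0\},\cdot)$, including the identification of the distinguished subsemigroup $J\subseteq\beta(F_q[x]\setminus\{0\})$ and its idempotent structure. Since the semigroup shares the relevant features of $(\mathbb{N},+)$ (commutativity, cancellativity), no structural obstruction is expected, but this transfer must be spelled out before the proof from \citep[Theorem 19.35]{refHS} can be applied. Once it is in place, the conclusion that not every IP$^*$-set is a dynamical IP$^*$-set follows by noting that $\varphi^{-1}B$ is IP$^*$ but, by the preceding discussion, $\varphi^{-1}B/f$ fails to be IP$^*$ for every nonconstant $f$, contradicting the hereditary property this theorem would force on a dynamical IP$^*$-set.
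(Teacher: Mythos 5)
There is a genuine gap, and it sits exactly at the step you wave through. Your argument rests on an ``ultrafilter characterization of dynamical IP$^*$-sets'' as the sets belonging to every idempotent of some distinguished closed subsemigroup $J\subseteq\beta S$. No such characterization is established in \citep{refBH} or \citep{refHS}, and it cannot be correct as you state it: every dynamical IP$^*$-set is IP$^*$, so every idempotent of $\beta S$ already contains every dynamical IP$^*$-set and hence lies in any candidate $J$; your criterion then collapses to ``$A$ is IP$^*$'', which is strictly weaker than ``$A$ is dynamical IP$^*$'' --- that distinction is the whole point of the parenthetical in the theorem. Worse, the ``promotion'' step, passing from $\{f\in B:f^{-1}B\in p\}\in p$ (which holds for each idempotent $p$ separately) to membership in $p$ of the single set $\{f\in B:f^{-1}B\in q\text{ for all relevant }q\}$, is precisely the inference that fails for general IP$^*$-sets: by \citep[Theorem 16.32]{refHS}, quoted in this very paper, there is an IP$^*$-set $B$ in $(\mathbb{N},+)$ with $-n+B$ not IP$^*$ for any $n$, so a purely idempotent-theoretic argument of this shape cannot work. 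The shift-hereditary property being proved is a special feature of \emph{dynamical} IP$^*$-sets, and the only access to it is through the witnessing measure-preserving system.

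The proof the paper points to (the method of \citep[Theorem 19.35]{refHS}) is elementary and measure-theoretic, with no ultrafilters at all. Let $\left(X,\mathcal{B},\mu,(T_f)_{f\in F_q[x]\setminus\{0\}}\right)$ and $A$ with $\mu(A)>0$ witness that $B$ is dynamical IP$^*$, so that $R(A)=\{f:\mu(A\cap T_f^{-1}A)>0\}\subseteq B$. Take $C=R(A)$; it is dynamical IP$^*$ by definition, witnessed by the same system and the same $A$. Given $f\in C$, put $A'=A\cap T_f^{-1}A$, which has positive measure. Since the action is multiplicative, $T_g^{-1}T_f^{-1}=T_{fg}^{-1}$, hence
\[
A'\cap T_g^{-1}A'\;\subseteq\;A\cap T_g^{-1}T_f^{-1}A\;=\;A\cap T_{fg}^{-1}A,
\]
so $\mu(A'\cap T_g^{-1}A')>0$ forces $fg\in C$, i.e.\ $R(A')\subseteq C/f$, and $C/f$ is dynamical IP$^*$ as witnessed by $A'$. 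Your closing deduction of the parenthetical claim is essentially fine (taking $B$ from \citep[Theorem 16.32]{refHS}, pulling back by $\varphi$, and noting that $C/f\subseteq(\varphi^{-1}B)/f$ with $f$ necessarily nonconstant), but it needs the theorem itself to be proved first, and for that you should replace the ultrafilter scaffolding by the direct argument above.
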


It can be proved that if $B$ be a dynamical IP$^{*}$ set  in $(\mathbb{N},+)$
then $\varphi^{-1}(C)$ is also a dynamical IP$^{*}$ set in $(F_{q}[x],\cdot)$
by the transformation $T_{f}(x)=T^{\text{deg}f}(x)$. But we do not
know whether the converse is true or false.
In the discussion of the action of $(F_{q}[x],\cdot)$, we have noticed
that it behaves quite similar to the action of $(\mathbb{N},+)$. This motivates
us to raise the following question.
\begin{qs}
Given any ergodic system $\left(X,\mathcal{B},\mu,T_{f\in(F_{q}[x],\cdot)}\right)$,
are the sets,

\[
\{f\in F_{q}[x]:\mu(A\cap T_{f}A\cap T_{f^{2}}A)>\mu(A)^{3}-\epsilon\}
\]

and
\[
\{f\in F_{q}\left[x\right]:\mu(A\cap T_{f}A\cap T_{f^{2}}A\cap T_{f^{3}}A>\mu(A)^{4}-\epsilon\}
\]
syndetic subsets of $(F_{q}[x],\cdot)$?
\end{qs}
In contrast, it can be shown that for $k>3$, $\{f\in F_{q}\left[x\right]:\mu(A\cap T_{f}A\cap T_{f^{2}}A\cap T_{f^{3}}A\cap\ldots\cap T_{f^{k}}A>\mu(A)^{k+1}-\epsilon\}$
are not syndetic subsets of $(F_{q}[x],\cdot)$ by the transformation
$T_{f}(x)=T^{\text{deg}f}(x)$.

\section*{References}

%
%
%
%
%

\end{document}